\documentclass[12pt]{article}
\usepackage{latexsym}
\usepackage{amsmath,amssymb}
\usepackage{bm}
\usepackage{graphicx,xcolor}
\usepackage{amssymb}
\usepackage{cite}
\usepackage{amsfonts, amscd, amsthm}
\usepackage[all]{xy}
\usepackage{here}
\usepackage{ulem}
\usepackage{mathabx}
\usepackage[titletoc,title]{appendix}
\usepackage{mathbbol}
\usepackage{amsthm}

\theoremstyle{plain}
\theoremstyle{definition}
\newtheorem{thm}{Theorem}[section]
\newtheorem*{thm*}{Theorem}
\newtheorem{example}{Example}[section]
\newtheorem*{example*}{Example}

\newtheorem*{defi*}{Definition}

\setlength{\topmargin}{-20mm} 
\setlength{\oddsidemargin}{-7mm}
\setlength{\evensidemargin}{5mm}
\setlength{\textwidth}{177mm}
\setlength{\footskip}{10mm}
\setlength{\textheight}{245mm}

\renewcommand{\theequation}{\thesection.\arabic{equation}}

\newcommand{\R}{\mathbb{R}}
\newcommand{\Z}{\mathbb{Z}}
\newcommand{\F}{\mathbb{F}^{\times}_p}
\newcommand{\FF}{\mathbb{F}_p}

\newcommand{\md}{{\rm mod}}

\makeatletter
\@addtoreset{equation}{section}
\renewcommand{\theequation}{\thesection.\@arabic\c@equation}
\makeatother

\makeatletter
\renewcommand\appendix{\par
  \setcounter{section}{0}%
  \setcounter{subsection}{0}%
  \gdef\thesection{Appendix \@Alph\c@section }
  \renewcommand{\theequation}
  {\Alph{section}.\arabic{equation}}
}
\makeatother


\makeatletter
\newcounter{subeqncnt}
\def\thesubeqncnt{\alph{subeqncnt}}
\def\subequations{\begingroup%
\stepcounter{equation}\edef\@tempa{\theequation}%
\let\c@equation\c@subeqncnt\c@subeqncnt\z@
\edef\theequation{\@tempa\noexpand\thesubeqncnt}}

\makeatother

\allowdisplaybreaks

\begin{document}

\titlepage


\title{%
A Quadratic Curve Analogue\\ of the Taniyama-Shimura Conjecture
} 
\author{Masahito Hayashi\thanks{masahito.hayashi@oit.ac.jp}\\
Osaka Institute of Technology, Osaka 535-8585, Japan\\
Kazuyasu Shigemoto\thanks{shigemot@tezukayama-u.ac.jp} \\
Tezukayama University, Nara 631-8501, Japan\\
Takuya Tsukioka\thanks{tsukioka@bukkyo-u.ac.jp}\\
Bukkyo University, Kyoto 603-8301, Japan\\
}
\date{\empty}


\maketitle
\abstract{%
For quadratic curves over $\mathbb{F}_p$, the number of solutions, which is governed by an analogue of the Mordell-Weil group, is expressed with the Legendre symbol of a coefficient of quadratic curves.
Focusing on the number of solutions, a quadratic curve analogue of the modular form in the Taniyama-Shimura conjecture is proposed. 
This modular form yields the Gaussian sum and also possesses some modular transformation structure. 

\flushleft{{\bf Keywords:} quadratic curve analogue of the Mordell-Weil group, 
generalized Gaussian sum, analogue of the modular form,  
analogue of the Taniyama-Shimura conjecture.}
}

\section{Introduction} 
\setcounter{equation}{0}

For exactly solvable models in non-linear systems, the group
structure plays an important role. The KdV equation is one of the examples. 
The AKNS formalism exposes that the KdV equation has the 
$sl(2, \R)\cong so(2,1)/\Z_2 \cong sp(2, \R) \cong su(1,1)$ Lie algebra 
structure \cite{AKNS}. 
From geometrical approaches, we can also observe that the KdV equation has such Lie algebra 
structure\cite{Bianchi}\cite{Hermann}\cite{Crampin}\cite{Sasaki}. 
In addition, we can find the Lie group structure for the Jacobi type elliptic function\cite{Hayashi}. 
It is worth to mention that
the $\wp$-function is one of the static solutions of the KdV equation.
While, in some sense,
the Taniyama-Shimura conjecture\cite{Eichler}\cite{Shimura}\cite{Taylor} is considered to be the 
exactly solvable system because we can obtain all solutions for each elliptic curve over
specific $\FF$ from the Mordell-Weil group structure\cite{Mordell}\cite{Weil}
of that elliptic curve.
Parametrizing elliptic curves by the $\wp$-function, the Lie group structure of the $\wp$-function 
and the Mordell-Weil group in the Taniyama-Shimura conjecture are
strongly connected. The Mordell-Weil group can be considered to be the special Abelian subgroup
of non-Abelian $SL(2, \R)$ Lie group. In the Taniyama-Shimura conjecture, 
the Mordell-Weil group plays an essential role. 

In this paper, we consider an analogue of the Taniyama-Shimura conjecture for
quadratic curves over $\FF$. We first demonstrate to give the 
number of solutions for quadratic curves over $\FF$.
We find that the analogue of the Mordell-Weil group plays an important role in determining
the number of solutions.
Next, by constructing the analogue of the modular form of the Taniyama-Shimura 
conjecture for quadratic curves, we will demonstrate that such an analogue of the modular form 
can be considered as a generalization of the Gaussian sum and it has a structure of the modular transformation by using the other Gaussian sum.

\section{%
Number of Solutions of Quadratic Curves over $\FF$
}
Here we consider the number of solutions of quadratic curves 
over finite fields, which is the quadratic curve analogue of the Taniyama-Shimura conjecture. 
A formula which gives the number of solutions of quadratic curves in finite fields is 
elegantly derived by many authors\cite{Siegel}\cite{Braun}. 
Here we give an elementary and simple proof, which will help to understand the 
quadratic curve analogue of the Taniyama-Shimura conjecture.

In the following, we use the Legendre symbol defined for an odd prime number $p$ :
\[
\left( \frac{\mathstrut a}{p}\right)
=\left\{ \begin{array} {rl}
 1, & \text{if $a$ is a quadratic residue modulo $p$ and $a \not\equiv 0\ (\md\ p)$},\\
-1, & \text{if $a$ is a quadratic nonresidue modulo $p$},\\
 0, & \text{if $a \equiv 0\ (\md\ p)$}.
\end{array} \right.
\]
This definition is equivalent to 
\begin{align}
  \left( \frac{\mathstrut a}{p}\right)
  \equiv a^{\frac{p-1}{2}} \ (\md\ p)
  \quad \mathrm{and}\quad \left( \frac{\mathstrut a}{p}\right) \in \{-1,0,1\}.
\label{eq:Euler_criterion}
\end{align}

\subsection{%
Number of solutions of $\bm{m_1^{} x_1^2+m_2^{} x_2^2 \equiv n \pmod p}$ 
over $\FF$
}

Let us consider the number of solutions $N(p)$ of
\begin{align}
m_1^{} x_1^2+m_2^{} x_2^2 \equiv n \pmod p,
\label{eq:quadratic_curve}
\end{align}
over $\FF$.
For $p=2$, we always obtain two solutions. Then, hereafter, we consider only $p \ne 2$, namely,
$p$ is an odd prime number. 

\begin{thm}
$N(p)$ is given by 
$$\displaystyle{N(p)=p-\left(\frac{-m_1 m_2}{p}\right)}.$$ 
\end{thm}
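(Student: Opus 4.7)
The plan is to reduce the two-variable count $N(p)$ to a one-variable character sum and then to evaluate that sum by a double-counting argument. Throughout I assume $m_1^{}, m_2^{}, n \not\equiv 0 \pmod p$; the degenerate cases can be handled separately by direct inspection.

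First I would fix $x_1$ and count the $x_2 \in \FF$ satisfying $x_2^2 \equiv m_2^{-1}(n - m_1^{} x_1^2)$. The Euler criterion \eqref{eq:Euler_criterion} implies that for any $a \in \FF$ the number of square roots of $a$ equals $1 + \left(\frac{a}{p}\right)$ (this also holds at $a \equiv 0$, where both sides equal $1$). Summing over $x_1$ and using the multiplicativity of the Legendre symbol together with $\left(\frac{m_2^{-1}}{p}\right) = \left(\frac{m_2^{}}{p}\right)$, I obtain
\begin{align*}
N(p) = p + \left(\frac{m_2^{}}{p}\right)\sum_{x \in \FF}\left(\frac{n - m_1^{} x^2}{p}\right).
\end{align*}

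Factoring $\left(\frac{-m_1^{}}{p}\right)$ out of the inner sum reduces matters to evaluating
\begin{align*}
T(c) := \sum_{x \in \FF}\left(\frac{x^2 - c}{p}\right), \qquad c := n/m_1^{} \not\equiv 0 \pmod p.
\end{align*}
I would prove $T(c) = -1$ by counting the set $\{(x,y) \in \FF^{\,2} : y^2 - x^2 \equiv c \pmod p\}$ in two different ways. Summing over $x$ first and using the square-root count above gives $p + T(c)$. On the other hand, since $p$ is odd, the linear map $(x,y) \mapsto (u,v) = (y+x, y-x)$ is a bijection on $\FF^{\,2}$ (its determinant is $2 \not\equiv 0$), and the equation becomes $uv \equiv c$; since $c$ is nonzero, each nonzero $u$ determines a unique $v = c/u$, yielding exactly $p-1$ solutions. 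Equating the two counts gives $T(c) = -1$.

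Combining these, the inner character sum equals $-\left(\frac{-m_1^{}}{p}\right)$, and multiplicativity collapses the final expression to $p - \left(\frac{-m_1^{} m_2^{}}{p}\right)$, as claimed. The main obstacle is the evaluation of $T(c)$; without the substitution exploiting $y^2 - x^2 = (y+x)(y-x)$, the sum resists a clean elementary treatment, so the identity $T(c) = -1$ is really the heart of the argument.
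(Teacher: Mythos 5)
Your proof is correct, and it diverges from the paper's exactly at the point you identify as the heart of the argument. Both proofs begin the same way: the square-root count $1+\left(\frac{a}{p}\right)$ turns $N(p)$ into $p$ plus $\left(\frac{-m_1m_2}{p}\right)$ times a complete quadratic character sum (the paper sums products of counts over splittings $b+(1-b)\equiv 1$ after normalizing by $\bar n$, which is the same reduction as your fixing $x_1$ and counting $x_2$). The difference is how that sum is evaluated. The paper evaluates $S=\sum_{b}\left(\frac{b(b-1)}{p}\right)$ indirectly: it computes $N(p)$ a second way to show that $\sum_b\left(\frac{b(b-n)}{p}\right)$ is independent of $n\not\equiv 0$, then squares the relation $\sum_b\left(\frac{b}{p}\right)=0$ to get $0=(p-1)+(p-1)S$, hence $S=-1$. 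You evaluate $T(c)=\sum_x\left(\frac{x^2-c}{p}\right)$ directly as a point count: $p+T(\cdot)$ is the number of points on $y^2-x^2\equiv c$, and the change of variables $(u,v)=(y+x,\,y-x)$, invertible since $p$ is odd, turns that locus into the hyperbola $uv\equiv c$, which visibly has $p-1$ points. The two sums are in fact the same object in disguise, since completing the square gives $b(b-n)=\left(b-\frac{n}{2}\right)^2-\frac{n^2}{4}$, so $S=T(n^2/4)$; but your evaluation is self-contained and local to the lemma, whereas the paper's is intertwined with the theorem itself (it needs the second computation of $N(p)$ as input). What the paper's route buys is that it never leaves Legendre-symbol algebra -- no substitution or factorization, only multiplicativity and orthogonality -- and its two-countings device also furnishes the $n$-independence statement \eqref{2e7}, which the paper records for its own sake. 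One cosmetic slip on your side: for the set $\{(x,y):y^2-x^2\equiv c\}$, summing over $x$ first gives $p+T(-c)$, not $p+T(c)$; either fix $y$ and count $x$ instead, or note that $c\mapsto -c$ permutes the nonzero residues, so the conclusion $T(c)=-1$ for all $c\not\equiv 0$ is unaffected.
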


\begin{proof}
Multiplying $\bar{n}$, which is the inverse of $n$ in $\F$,
to Eq.\eqref{eq:quadratic_curve}, we obtain 
$\bar{n} m_1^{} x_1^2+\bar{n}m_2^{} x_2^2\equiv 1 \pmod p$. To find the number 
of solutions to this equation, we consider the following pair of equations 
$\bar{n} m_1^{} x_1^2\equiv b \pmod p$ and $\bar{n}m_2^{} x_2^2\equiv 1-b \pmod p$.
By using the fact that the number of solutions of $X^2 \equiv a \pmod p$ is given by 
$\left\{\left( \dfrac{a}{p} \right) +1\right\}$, 
the number of solutions of Eq.\eqref{eq:quadratic_curve} is given by
\begin{align}
N(p)
=\sum_{b=0}^{p-1} \left\{ \left(\frac{n m_1 b}{p}\right) +1\right\}
\left\{ \left(\frac{n m_2 (1-b)}{p}\right) +1\right\}
=p+\left(\frac{-m_1 m_2}{p}\right) \sum_{b=0}^{p-1}  \left(\frac{b(b-1)}{p}\right), 
\label{2e3}
\end{align}
where we used Eq.\eqref{eq:Euler_criterion} and
$\displaystyle{\sum_{b=0}^{p-1}  \left(\frac{b}{p}\right)=0}$.

Next, we calculate the same $N(p)$ in another way, that is, we consider
the pair of equations  $m_1^{} x_1^2\equiv c \pmod p$ and 
$m_2^{} x_2^2\equiv n-c \pmod p$. Then we obtain another expression of 
$N(p)$ in the form
\begin{align}
N(p)
=\sum_{c=0}^{p-1} \left( \left(\frac{ m_1 c}{p}\right) +1\right)
\left\{ \left(\frac{m_2 (n-c)}{p}\right) +1\right\}
=p+\left(\frac{-m_1 m_2}{p}\right) \sum_{c=0}^{p-1}  \left(\frac{c(c-n)}{p}\right). 
\label{2e4}
\end{align}
By comparing Eq.(\ref{2e3}) with Eq.(\ref{2e4}), we obtain
an $n$-independent number defined by $S$:
\begin{align}
S
=\sum_{b=0}^{p-1}  \left(\frac{b(b-1)}{p}\right)
=\sum_{b=0}^{p-1}  \left(\frac{b(b-n)}{p}\right). 
\label{2e5}
\end{align}

Next, from the square of the relation 
$\displaystyle{\sum_{b=0}^{p-1} \left(\frac{b}{p}\right)}=0$, we obtain
\begin{align}
0=&\sum_{b=0}^{p-1} \left(\frac{b^2}{p}\right)
  +\sum_{b=0}^{p-1} \left(\frac{b(b-1)}{p}\right)
  +\sum_{b=0}^{p-1} \left(\frac{b(b-2)}{p}\right)   
  +\cdots 
  +\sum_{b=0}^{p-1} \left(\frac{b(b-(p-1))}{p}\right) 
\nonumber\\
 =&(p-1)+S \cdot (p-1)=(p-1)(S+1), 
\label{2e6}
\end{align}
which gives  $S=-1$. Thus we obtain 
\begin{align}
S=\sum_{b=0}^{p-1}  \left(\frac{b(b-1)}{p}\right)
 =\sum_{b=0}^{p-1}  \left(\frac{b(b-n)}{p}\right)=-1, 
\label{2e7}
\end{align}
and 
\begin{align}
N(p)=p+\left(\frac{-m_1 m_2}{p}\right) S=p-\left(\frac{-m_1 m_2}{p}\right) .
\label{2e8}
\end{align}
\end{proof} 
%
Denoting the combination of $p-N(p)$ as $b(p)$, we have  
\begin{align}
b(p)=p-N(p)=\left(\frac{-m_1 m_2}{p}\right).
\label{2e9}
\end{align}

We can transform the general quadratic curves into the form 
$y^2 \equiv a x^2+1$.
We first rewrite Eq.\eqref{eq:quadratic_curve} into 
the form
$(m_2\,x_2)^2= -m_1m_2\,x_1^2+m_2\,n$, and denote
$m_2\,x_2=y$, $x_1=x$, $-m_1 m_2=a$. 
Because the number of solutions is independent of $m_2 n(\ne 0)$,
we can choose $m_2 n=1$. Therefore, hereafter, we adopt
$$
  y^2 \equiv a x^2+1 \pmod p,
$$
as quadratic curves over $\FF$. Here Eq. (\ref{2e8}) turns to be 
\begin{equation}
N(p)=p-\left(\frac{a}{p}\right).
\label{2e10}
\end{equation}

We can calculate $N(p)$ in another way. 

\begin{thm}
The other expression of $N(p)$ is given by 
$$
\displaystyle{N(p)=p+\sum_{x=0}^{p-1} \left(\frac{a x^2+1}{p}\right)}.
$$
\end{thm}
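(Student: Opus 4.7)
The plan is to count solutions by fixing $x$ first and asking how many $y$ satisfy $y^2 \equiv ax^2 + 1 \pmod p$. This is exactly the kind of per-fiber count that the Legendre symbol was designed to handle, so the whole statement should reduce to a single summation identity.

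First I would recall the counting fact already used in the proof of Theorem 2.1: for any $c \in \FF$, the number of $y \in \{0, 1, \ldots, p-1\}$ with $y^2 \equiv c \pmod p$ equals $1 + \left(\frac{c}{p}\right)$. One checks the three cases immediately from the definition of the Legendre symbol: $c$ a nonzero quadratic residue gives two roots, a non-residue gives none, and $c \equiv 0$ gives the single root $y = 0$. In each case the formula $1 + \left(\frac{c}{p}\right)$ returns $2$, $0$, or $1$, respectively.

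Next I would apply this with $c = ax^2 + 1$ and sum over $x \in \{0, 1, \ldots, p-1\}$. Since the total number $N(p)$ of solutions $(x, y)$ to $y^2 \equiv ax^2 + 1 \pmod p$ partitions according to the value of $x$, we get
\begin{align*}
N(p) = \sum_{x=0}^{p-1} \left( 1 + \left(\frac{ax^2+1}{p}\right) \right)
     = p + \sum_{x=0}^{p-1} \left(\frac{ax^2+1}{p}\right),
\end{align*}
which is the claimed expression.

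There is essentially no main obstacle here: the argument is a one-line fiber count, and in contrast to Theorem 2.1 we do not need the auxiliary identity for $\sum_b \left(\frac{b(b-n)}{p}\right)$. The only thing worth stressing is the convention $\left(\frac{0}{p}\right) = 0$ stated at the beginning of the section, which is exactly what makes the formula $1 + \left(\frac{c}{p}\right)$ uniformly valid including the degenerate case $c \equiv 0$. Combined with Eq.~(2.10), this second expression also yields the nontrivial identity $\sum_{x=0}^{p-1} \left(\frac{ax^2+1}{p}\right) = -\left(\frac{a}{p}\right)$, which is presumably the reason the authors record it as a separate theorem.
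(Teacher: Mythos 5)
Your proof is correct and is essentially identical to the paper's: both fix $x$, count the number of $y$ with $y^2 \equiv ax^2+1 \pmod p$ as $1+\left(\frac{ax^2+1}{p}\right)$ (your ``counting fact'' is exactly the paper's three-case classification of the symbol as $+1$, $-1$, or $0$), and sum over $x$. No gaps; the remark about $\left(\frac{0}{p}\right)=0$ handling the degenerate fiber is a nice touch.
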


\begin{proof}
The number of solutions that satisfy $y^2 \equiv a x^2+1 \pmod p$ can be found by classifying $x$ into the following three cases, 
$$
\textrm{i}) \   \left(\dfrac{ax^2+1}{p} \right)=+1, \quad  
\textrm{ii}) \  \left(\dfrac{ax^2+1}{p} \right)=-1, \quad \textrm{and}\quad  
\textrm{iii}) \ \left(\dfrac{ax^2+1}{p} \right)= 0. 
$$
For each $x$, there are 2 solutions for i), 0 for ii) and 1 for iii).
Then the number of solutions is given by
\begin{align}
N(p)
  =\sum_{x=0}^{p-1} \left\{ \left(\frac{a x^2+1}{p}\right)+1\right\}
  =p+\sum_{x=0}^{p-1} \left(\frac{a x^2+1}{p}\right).
\label{2e11}
\end{align}
\end{proof} 

Combining two expressions of $N(p)$, we obtain 
\begin{align}
b(p)=p-N(p)=\left( \frac{a}{p}\right)=-\sum_{x=0}^{p-1} \left(\frac{a x^2+1}{p}\right).
\label{2e12}
\end{align}
Our method is applicable to obtain the number of solutions for the elliptic curves
over $\FF$ in the form $y^2 \equiv x^3 + k_2x^2 +k_1x+k_0 \pmod p$\cite{Artin} as 
\begin{align}
b(p)=p-N(p)=-\sum_{x=0}^{p-1} \left(\frac{x^3 + k_2x^2 +k_1x+k_0}{p} \right) .
\label{2e13}
\end{align}

\subsection{%
The quadratic curve analogue of the Mordell-Weil group
}

We give another explanation to Eq.\eqref{2e10} from the point of view of group theory.

\begin{thm}
Let $S\subset\mathbb{F}_p^2$ as 
$$
S=\big\{\, (x, \, y)\ \big| \ y^2\equiv ax^2+1 \pmod p \, \big\}. 
$$
Defining an addition of two points $\textrm{P} (x_1, \, y_1)$ and $\textrm{Q} (x_2, \, y_2)$ as 
\begin{equation}
\textrm{P} (x_1, \, y_1)+\textrm{Q} (x_2, \, y_2)=\textrm{R} (x_3, \, y_3):=(x_1 y_2+y_1 x_2, \, y_1 y_2+a x_1 x_2)\in S, 
\label{2e14}
\end{equation}
$S$ forms a group. 
\end{thm}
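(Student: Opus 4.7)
The plan is to verify the four group axioms (closure, associativity, identity, inverse) directly, exploiting a convenient matrix representation for associativity.

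First I would identify the candidate identity and inverse by inspection: setting $(x_1,y_1)=(0,1)$ in the addition law yields $(x_2,y_2)$, so $O=(0,1)$ is the identity (and $O\in S$ since $1\equiv a\cdot 0+1$). For the inverse of $\textrm{P}(x,y)$ I would try $(-x,y)$; then the addition rule gives $(xy+y(-x),\,y^2+ax(-x))=(0,\,y^2-ax^2)=(0,1)=O$, using the defining relation $y^2\equiv ax^2+1$. Note $(-x,y)\in S$ trivially.

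For closure I would compute
\begin{align}
y_3^2-ax_3^2
&=(y_1y_2+ax_1x_2)^2-a(x_1y_2+y_1x_2)^2 \nonumber\\
&=y_1^2y_2^2+a^2x_1^2x_2^2-ax_1^2y_2^2-ay_1^2x_2^2 \nonumber\\
&=(y_1^2-ax_1^2)(y_2^2-ax_2^2)\equiv 1\cdot 1=1\pmod p, \nonumber
\end{align}
so $\textrm{R}\in S$. Commutativity is immediate from the symmetry of the defining formulas in the indices $1$ and $2$.

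The only step requiring care is associativity. Rather than expanding $(\textrm{P}+\textrm{Q})+\textrm{R}$ and $\textrm{P}+(\textrm{Q}+\textrm{R})$ by brute force, I would exploit the observation that the addition law is exactly matrix multiplication under the embedding
\[
\varphi:S\longrightarrow GL_2(\FF),\qquad
\varphi(x,y)=\begin{pmatrix} y & ax \\ x & y \end{pmatrix}.
\]
A direct check shows $\varphi(x_1,y_1)\,\varphi(x_2,y_2)=\varphi(x_3,y_3)$ with $(x_3,y_3)$ as in \eqref{2e14}, and $\det\varphi(x,y)=y^2-ax^2\equiv 1\pmod p$. Thus $\varphi$ is an injective map (recover $x$ and $y$ from the entries) whose image is closed under matrix multiplication and is contained in $SL_2(\FF)$; associativity on $S$ then follows from associativity of matrix multiplication. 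The image is precisely the subgroup $\{M\in SL_2(\FF):M_{11}=M_{22},\ M_{12}=aM_{21}\}$, so $\varphi$ realises $S$ as a subgroup of $SL_2(\FF)$, which simultaneously confirms all group axioms. The main (very mild) obstacle is simply verifying the multiplicativity of $\varphi$, which is a short $2\times 2$ computation.
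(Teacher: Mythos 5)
Your proof is correct, but it takes a genuinely different route from the paper. The paper's proof is a short sketch: it parametrizes the curve by trigonometric functions, $i\sqrt{a}\,x_i=\sin\theta_i$, $y_i=\cos\theta_i$, observes that the stated addition law is exactly the angle-addition formulas, and then appeals to this parametrization to claim associativity, identity and inverses. That argument is conceptually illuminating but informal over $\FF$: the symbols $i$, $\sqrt{a}$ and $\sin\theta$ need not exist in $\FF$ (they live in an extension, or are purely formal), so the group axioms are never verified inside the finite field itself. Your argument repairs exactly this defect: the embedding $\varphi(x,y)=\left(\begin{smallmatrix} y & ax\\ x & y\end{smallmatrix}\right)$ is defined over $\FF$ for every $a$, regardless of whether $a$ is a quadratic residue; multiplicativity of $\varphi$ is your closure computation restated; and injectivity plus associativity of matrix multiplication give associativity on $S$, with the identity $(0,1)$ and inverse $(-x,y)$ falling out as $\varphi^{-1}(I)$ and $\varphi^{-1}(M^{-1})$. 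In effect you replace the paper's analytic $SO(2)$ analogy by the algebraic statement that $S$ is isomorphic to the norm-one subgroup of $\FF[t]/(t^2-a)$, realized inside $SL_2(\FF)$ --- the same structure the paper only exploits later, in its Theorem 2.4, via the element $Z=rx+y$. What the paper's approach buys is brevity and the motivating picture (the group law \emph{is} the trigonometric addition formula); what yours buys is an actual proof valid verbatim over $\FF$, uniform in $a$, with no extension fields or formal square roots required.
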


\begin{proof}
If we parametrize the quadratic curves $y^2=a x^2+1$ 
by the trigonometric functions
in the form $i \sqrt{a} x_i=\sin\theta_i$, $y_i=\cos\theta_i$,
the above group law of the addition of points on the quadratic curves 
${\rm P}+{\rm Q}={\rm R}$
is determined in such a way that it becomes equivalent to the 
trigonometric addition formula
$$
\sin\theta_3=\sin\theta_1\cos\theta_2+\cos\theta_1\sin\theta_2, \quad
\cos\theta_3=\cos\theta_1\cos\theta_2-\sin\theta_1\sin\theta_2.
$$
This parametrization makes it easy to see an associativity of additions as well as to find a unit element and inverse elements.
\end{proof}

We call $S$ as the quadratic curve analogue of the Mordell-Weil group.
In order to see how $N(p)$ is related to the group structure, 
we first give an example, which will be helpful in understanding 
the proof for the general case.

\begin{example}
$y^2 \equiv -3 x^2+1 \pmod p$.
\[
\begin{tabular}{|c||c|c|c|c|c|c|c|c|c|} \hline
$p$\rule[-2mm]{0mm}{7mm}
&   3  &   5  &  7  &  11  & 13  &  17  & 19  &  23  &  29  \\\hline
$N(p)$\rule[-2mm]{0mm}{7mm}
&   6  &   6  &  6  &  12  & 12  &  18  & 18  &  24  &  30  \\\hline 
$b(p)=p-N(p)$\rule[-2mm]{0mm}{7mm}
& $-3$ & $-1$ & ~1~ & $-1$ & ~1~ & $-1$ & ~1~ & $-1$ & $-1$ \\\hline
$\left(\dfrac{-3}{p}\right)$\rule[-5mm]{0mm}{12mm}
& $-$  & $-1$ & ~1~ & $-1$ & ~1~ & $-1$ & ~1~ & $-1$ & $-1$ \\\hline
\end{tabular}
\]\\
We can see that $b(p)=p-N(p)=\left( \dfrac{-3}{p} \right)$ 
for the prime of $(p,3)=1$. 
For the case of $(p,3) \ne 1$,\break namely $p=3$,
we obtain two solutions $y=1,2$ for each $x=0, 1, 2$.
Then we obtain $N(p)=6$ and $b(p)=p-N(p)=-3$.
This result can be generalized to 
$N(p)$'s and $b(p)$'s of $y^2 \equiv ax^2+1\ (\md\ p)$,
where $a=\pm p,\pm2p,\cdots$. In these cases, $y^2 \equiv 1\ (\md\ p)$. This equation has two solutions $y = 1,\,p-1$.
Since $x$ is arbitrary, $N(p)=2p$ and $b(p)=p-N(p)=-p$.

For $p=5$, solutions are given by
$$(x,y)=(0,1), ~(0,4), ~(2,2), ~(2,3), ~(3,2), ~(3,3),$$
and the number of solutions $N(5)=6$. This ``6'' is the order of the quadratic curve
analogue of the Mordell-Weil group, as we show in the following way.
We start from  ${\rm P}_1=(2,3)$, and use the addition formula
Eq.\eqref{2e14} with $a=-3$ by identifying $x_1=x_2=2$
and $y_1=y_2=3$. Then we have $x_3=12\equiv 2 \pmod 5$,  
$y_3=-3\equiv 2 \pmod 5$. Namely, we obtain 
P$_1$+P$_1$=$[2] {\rm P}_1\equiv (2,2)$.
By similar calculations, we obtain
$$
  [2]{\rm P}_1 \equiv (2,2),\ [3]{\rm P}_1 \equiv (0,4),\ [4] {\rm P}_1 \equiv (3,2),
\ [5]{\rm P}_1 \equiv (3,3),\ [6]{\rm P}_1 \equiv (0,1)={\rm E} \pmod 5.
$$ 
${\rm E}=(0,1)$ is the unit element of the group.

We can understand this group law by rewriting the quadratic curves 
$3 x^2+y^2=1 \pmod p$ to $X^2+Y^2=1 \pmod p$ 
with $X=\sqrt{3}x$, $Y=y$
by using the algebraic integer $\sqrt{3}$. 
$(X,Y)$ is an element of $SO(2)$.
We further rewrite it to 
$Z=i X+Y=\sqrt{-3} x+y, \pmod p$ with $Z^6=1 \pmod p$.
$Z$ is an element of $U(1)$.
Then we associate $(x_i,y_i)$ and $Z_i$ as shown in the table below: 
\[
\begin{tabular}{|c||c|c|c|c|c|c|} \hline
$(x_i,y_i)$\rule[-2mm]{0mm}{7mm}&
(0,1)&(0,4)&(2,2)&(2,3)&(3,2)&(3,3)  \\\hline
$Z_i=\sqrt{\mathstrut -3}\,x_i+y_i$\rule[-2mm]{0mm}{7mm}&
$Z_1$&$Z_2$&$Z_3$&$Z_4$&$Z_5$&$Z_6$  \\\hline
\end{tabular}
\]
If we take $Z_4$ as the generator, we have 
$$
(Z_4)^2 \equiv Z_3,\
(Z_4)^3 \equiv Z_2,\ 
(Z_4)^4 \equiv Z_5,\ 
(Z_4)^5 \equiv Z_6,\ 
(Z_4)^6 \equiv Z_1, \pmod 5,
$$ 
%
where $Z_1$ is the unit element.
This form is easier to understand that the quadratic curve analogue of 
the Mordell-Weil group is the cyclic group.

For the quadratic curve, the solution at infinity does not constitute
the element of this quadratic curve analogue of Mordell-Weil group. 
From this reason, we can choose another generator such as ${\rm Q}_1=(3,3)$, and we have 
$$
[2] {\rm Q}_1\equiv (3,2),\ 
[3] {\rm Q}_1\equiv (0,4),\
[4] {\rm Q}_1\equiv (2,2),\
[5] {\rm Q}_1\equiv (2,3),\  
[6] {\rm Q}_1\equiv (0,1)={\rm E} \pmod 5.$$
\end{example}

\begin{thm}
The quadratic curve analogue of 
the Mordell-Weil group for the curve $y^2 \equiv a x^2+1 \pmod p$ becomes the cyclic group and the 
number of solutions $N(p)$ is the order of this cyclic group.
\end{thm}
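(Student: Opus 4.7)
The plan is to realize $S$ as a multiplicative subgroup of the quadratic extension $\FF[\sqrt{a}]^\times$, so that cyclicity of $S$ follows from the classical cyclicity of $\F$ and $\mathbb{F}_{p^2}^\times$, while the order reads off directly from Theorem~2.1 (in the form Eq.~(\ref{2e10})).

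The starting observation is the algebraic identity
\begin{equation*}
(y_1+\sqrt{a}\,x_1)(y_2+\sqrt{a}\,x_2)=(y_1 y_2+a x_1 x_2)+\sqrt{a}\,(x_1 y_2+y_1 x_2),
\end{equation*}
which shows that the addition rule Eq.~(\ref{2e14}) is precisely ordinary multiplication of ``norm-one numbers'' $z:=y+\sqrt{a}\,x\in\FF[\sqrt{a}]$. The defining curve equation is then the norm-one condition $z\bar{z}\equiv 1\pmod p$ with $\bar{z}:=y-\sqrt{a}\,x$. Hence $(x,y)\mapsto z$ is an injective group homomorphism from $S$ into $\FF[\sqrt{a}]^\times$, with image contained in the norm-one subgroup.

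I would then split according to the Legendre symbol. If $\left(\tfrac{a}{p}\right)=+1$, writing $a\equiv c^2\pmod p$ collapses $\FF[\sqrt{a}]$ to $\FF$, and $(x,y)\mapsto u=y+cx$ gives a group isomorphism from $S$ onto $\F$: the inverse recovers $(x,y)$ from the pair $y+cx=u$, $y-cx=u^{-1}$. Thus $S\cong\F$ is cyclic of order $p-1=p-\left(\tfrac{a}{p}\right)$, matching Theorem~2.1.

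The non-residue case $\left(\tfrac{a}{p}\right)=-1$ is the main obstacle. Here $\FF[\sqrt{a}]\cong\mathbb{F}_{p^2}$ is a genuine field, $\mathbb{F}_{p^2}^\times$ is cyclic of order $p^2-1$, and the image of $S$ lies in the kernel of the Frobenius norm map $z\mapsto z^{p+1}$. Since every subgroup of a cyclic group is cyclic, $S$ is cyclic; its order is $|S|=N(p)=p+1$ directly from Theorem~2.1, which also forces the embedding to exhaust the full kernel (whose order equals $p+1$ by surjectivity of the norm map onto $\F$). Combining both cases gives that $S$ is cyclic with order $N(p)$.
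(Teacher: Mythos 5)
Your proposal is correct, and its core mechanism is the same one the paper uses: pass to $z = y + \sqrt{a}\,x$, under which the addition law \eqref{2e14} becomes field multiplication and the curve equation becomes the norm-one condition, then split on $\left(\frac{a}{p}\right)$, using the Frobenius relation $z^p \equiv \bar{z}$ in the non-residue case. The genuine difference lies in how cyclicity is certified. The paper tries to produce an explicit generator: it writes $a = r^2$ (residue case) or $a = r$ (non-residue case) with $r$ ``the proper primitive root'' of $\F$, and asserts that the smallest $n$ with $Z^n \equiv 1 \pmod p$ is $p-1$, resp.\ $p+1$. As written this has gaps: not every quadratic residue is the square of a primitive root (take $a \equiv 1$ with $p>3$), not every non-residue is a primitive root (take $a=6$, $p=7$), and the claim that a particular $Z$ has full order depends on which solution $(x,y)$ is chosen --- which is essentially the cyclicity being proven. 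Your structural argument repairs exactly these points: in the residue case you exhibit an isomorphism of $S$ onto $\F$ (the inverse map $y=(u+u^{-1})/2$, $x=(u-u^{-1})/(2c)$ exists because $p$ is odd and $c\not\equiv 0$), so $S$ is cyclic of order $p-1$; in the non-residue case you identify the image of $S$ with the kernel of the norm map $\mathbb{F}_{p^2}^{\times}\to\F$, invoke the fact that every subgroup of a cyclic group is cyclic, and read off the order $p+1$ either from Eq.\ \eqref{2e10} or from surjectivity of the norm. What your route buys is rigor and independence from any special choice of $a$; what the paper's route buys is the concrete generator picture --- every solution as a power of a single $Z$ --- which is what its worked example with $p=5$, $a=-3$ illustrates.
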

 
\begin{proof}
If $\left(\dfrac{a}{p} \right)=1$, we set $a=r^2$ where $r$ is the proper primitive root of $\F$.
Then we first rewrite the quadratic curves to
$X^2+Y^2 \equiv 1\ ({\rm mod}\ p)$ with 
$X=\sqrt{-r^2}x=i r x$, $Y=y$. Further, we make the 
combination in the form  $Z=-i X+Y=r x+y$.
Then solutions of the quadratic curves $(x,y)$ is rewritten 
with $Z$ in the form  $Z= r x+y$, then $Z^p \equiv r^p x^p+y^p\equiv 
r x+y \equiv Z \pmod p$. From the property of the primitive 
root, $Z, Z^2, \cdots, $ and  $Z^{p-1}$ are all different
since the smallest integer $n$ which satisfies 
$Z^n \equiv 1 \pmod p$ is $p-1$. That is, the number of the solution is 
$N(p)=p-1=p-\left(\dfrac{a}{p} \right)$ and 
the quadratic curve analogue of the Mordell-Weil group becomes the cyclic group.

If $\left(\dfrac{a}{p} \right)=-1$, we set $a=r$ where $r$ is the proper primitive root of $\F$.
Then we first rewrite the elliptic curve to
$X^2+Y^2 \equiv 1\ ({\rm mod}\ p)$ with $X=\sqrt{-r}x=i \sqrt{r} x$, $Y=y$. 
Further we put $Z=-i X+Y=\sqrt{r} x+y$.
Thus solutions of the quadratic curves $(x,y)$ is rewritten 
with $Z$ in the form  $Z=\sqrt{r} x+y$. Then $Z^p \equiv r^{p/2}x^p+y^p\equiv 
-\sqrt{r} x+y \equiv \bar{Z} \pmod p$, $Z^{p+1} \equiv Z \bar{Z} \equiv 1  \pmod p$, where we used 
$r^{\frac{p-1}{2}} \equiv -1 \pmod p$ for $\left(\dfrac{r}{p} \right)=-1$
yielded from Eq.\eqref{eq:Euler_criterion}. 
From the property of the primitive 
root, $Z, Z^2, \cdots, $ and $Z^{p+1}$ are all different
since the smallest integer $n$ which satisfies 
$Z^n \equiv 1 \pmod p$ is $p+1=1-\left(\dfrac{a}{p} \right)=N(p)$. 
\end{proof}

\section{%
Generalized Gaussian Sum}
Here we consider the quadratic curve analogue of the modular form of the Taniyama-Shimura conjecture.

\subsection{%
The quadratic curve anologue of the modular form of the 
Taniyama-Shimura conjecture}

We denote the number of solutions for the elliptic curve 
$$
y^2 \equiv x^3+k_2 x^2+k_1 x+k_0 \pmod p,
$$
over $\FF$ by $\hat{N} (p)$ and define $\hat{b}(p)=p-\hat{N}(p)$. Suppose a modular form, which corresponds to this elliptic curve, to be 
$$
 f(\tau)=\sum_{n=1}^{\infty} \hat{c}(n) q^n,  \quad q=\exp{(2 \pi i \tau)}.
$$
Then the Taniyama-Shimura conjecture claims $\hat{b}(p)=\,\hat{c}(p)$ for prime numbers $p$.
In our quadratic case, i.e.
$$
y^2 \equiv a x^2+1\ (\md\ p), 
$$
over $\FF$, the number of solutions is given by 
$N(p)=p-\left( \dfrac{a}{p} \right)$ and we define $b(p)=p-N(p)=\left( \dfrac{a}{p} \right)$. 
Then the quadratic curve analogue of the modular form is given by
$$
f(\tau)=\sum_{n=1}^{\infty} c(n) q^n,  \quad q=\exp{(2 \pi i \tau) }.
$$
with $c(p)=b(p)=\left( \dfrac{a}{p} \right)$  for prime numbers $p$. 
 
Based on the above discussions, the quadratic curve analogue 
of the modular form should at least include
$\left( \dfrac{\mathstrut a}{p}\right) q^p$ terms.  
For such a form to have some modular transformation property, it must include terms $b(n) q^n$ with a non-prime integer $n$. Candidate that satisfies these requirements can be obtained by replacing the prime number $p$ with any integer $n$. At the same time, we have to replace the Legendre symbol with the Kronecker symbol in the form
$\left( \dfrac{\mathstrut a}{p}\right) q^p \rightarrow \left( \dfrac{\mathstrut a}{n}\right)_K q^n\rule[-12pt]{0pt}{30pt}$,
because the Legendre symbol is not defined for non-prime integer $n$. Note that for odd prime number $p$,
$
 \left( \dfrac{\mathstrut a}{p}\right)_K
=\left( \dfrac{\mathstrut a}{p}\right)
\rule[-12pt]{0pt}{30pt}$.

Let us discuss the periodicities of the Kronecker symbol. For $a \equiv 1 \pmod 4$, we obtain 
$$
 \left(\frac{\mathstrut a}{p}\right)_K
=\left(\frac{\mathstrut p}{a}\right)_K (-1)^{\frac{a-1}{2} \cdot \frac{p-1}{2}}
=\left(\frac{\mathstrut p+a\ell}{a}\right)_K (-1)^{\frac{a-1}{2} \cdot \frac{p+a\ell-1}{2}}
=\left(\frac{\mathstrut a}{p+a \ell} \right)_K,
$$ 
for any integer $\ell$. That is, $\left(\dfrac{\mathstrut a}{p}\right)_K$ is periodic with respect to $p \pmod a$.

For $a \equiv 3 \pmod 4$, we obtain 
$$
 \left(\frac{\mathstrut a}{p}\right)_K
=\left(\frac{\mathstrut p}{a}\right)_K (-1)^{\frac{a-1}{2} \cdot \frac{p-1}{2}}
=\left(\frac{\mathstrut p+4a \ell}{a}\right)_K (-1)^{\frac{a-1}{2} \cdot \frac{p+4a\ell-1}{2}}
=\left(\frac{\mathstrut a}{p+4a\ell}\right)_K.
$$ 
That is, $\left(\dfrac{\mathstrut a}{p}\right)_K$ is periodic with respect to $p \pmod{4a}$.

For $a \equiv 2 \pmod 4$, we put $a=2 a'$ with $a'$ is odd integer. Then we have 
\begin{align*}
  \left(\frac{\mathstrut a}{p}\right)_K
&=\left(\frac{\mathstrut 2}{p}\right)_K \left(\frac{\mathstrut a'}{p}\right)_K
 =(-1)^{\frac{\mathstrut p^2-1}{8}}
 \left(\frac{\mathstrut p}{a'}\right)_K (-1)^{\frac{a'-1}{2} \cdot \frac{p-1}{2}} \\
&=(-1)^{\frac{\mathstrut (p+8 a' \ell)^2-1}{8}}\left(\frac{p+8 a' \ell}{a'}\right)_K 
 (-1)^{\frac{\mathstrut a'-1}{2} \cdot \frac{p+8 a' \ell-1}{2}}
=\left(\frac{\mathstrut a}{p+4a\ell}\right)_K.
\end{align*}
That is, $\left(\dfrac{\mathstrut a}{p}\right)_K$ is periodic with respect to $p \pmod{4a}$.

Then we arrive at the quadratic curve analogue of the modular form 
\begin{align}
f(\tau)=
\left\{ \begin{array}{ll}\displaystyle
~\sum_{n=1}^{\infty}~
\left( \frac{\mathstrut a}{n}\right)_K q^n,\quad q=\exp( 2 \pi i \tau), 
& \text{if~}a \equiv 1 \pmod 4,
\\[10pt]\displaystyle
\sum_{\genfrac{}{}{0pt}{}{n=1}{(n, 4a) = 1}}^{\infty}\!\!\!
\left( \frac{\mathstrut a}{n}\right)_K q_1^n, \quad q_1=\exp( 2 \pi i \tau/4), 
& \text{if~} a \equiv 2, 3 \pmod 4.
\end{array} \right.
\label{3e1}
\end{align}
In order to concrete our claim, we will show the followings in subsequent subsections:
\begin{enumerate}
  \item $f(\tau)$ becomes the Gaussian sum if $\tau$ is the special value $\tau_0$,
  \item $f(\tau)$ is associated with the theta function, so it has the structure of the modular transformation. 
\end{enumerate}
Then we call the above infinite sum Eq.\eqref{3e1} the generalized Gaussian sum. 
If $\tau$ is the special value $\tau_0$, the generalized Gaussian sum becomes periodic, and we denote
one of its periods, which is proportional to the Gaussian sum, as $\bar{f}(\tau_0)$.

We close this subsection by giving some examples of $\bar{f}(\tau_0)$, which will be 
helpful to understand the proof in the following subsections. 

\begin{example}
${}$

\begin{enumerate}
  \item For $y^2 \equiv~~5 x^2+1 \pmod p$,\ 
we obtain $\bar{f}(1/5)=q-q^2-q^3+q^4=\sqrt{5}=G_5$.
  \item For $y^2 \equiv -3 x^2+1 \pmod p$,\ 
we obtain $\bar{f}(1/3)=q-q^2=\sqrt{-3}=G_3$.
  \item For $y^2 \equiv~~3 x^2+1 \pmod p$,\ 
we obtain $\bar{f}(1/3)=q_1-q_1^5=\sqrt{3}=-iG_3$.
  \item For $y^2 \equiv -5 x^2+1 \pmod p$,\ 
we obtain $\bar{f}(1/5)=q_1+q_1^3+q_1^7+q_1^9=\sqrt{-5}=iG_5$
\end{enumerate}
\end{example}

\subsection{Gaussian sum} 

In this subsection, we show that the Gaussian sum can be extracted from the 
quadratic curve analogue of the modular form givn in Eq.\eqref{3e1}. 

Here we list properties of the Kronecker symbol used in the following calculations.
Let $m=2^{e_1}m', n=2^{e_2}n'$\ ($m', n'=$ odd integer): 
\begin{align}
{\rm i)}\ & \text{if $m>0$ or $n>0$ and $(m,n)=1$
($e_1=0$ or $e_2=0$ and $(m',n')=1$),} 
\nonumber\\
& \left( \frac{\mathstrut n}{m}\right)_K \left( \frac{\mathstrut m}{n}\right)_K
=(-1)^{\frac{m'-1}{2} \cdot \frac{n'-1}{2}}. 
\label{3e2}\\
{\rm ii)}\ &\left( \frac{\mathstrut n}{2}\right)_K= \left( \frac{2}{n}\right)_K
=\left\{ \begin{array} {rl}
          1, & \text{if~}n \equiv 1, 7 \pmod 8,\\
         -1, & \text{if~}n \equiv 3, 5 \pmod 8,\\
          0, & \text{if~}2|n. 
         \end{array}
 \right.
\label{3e3}\\
{\rm iii)}\ &\left( \frac{-1}{n}\right)_K= (-1)^{\frac{n'-1}{2}}. 
\label{3e4}\\
{\rm iv)}\ & \text{if $n \ne -1$ }, 
\nonumber\\
& \left( \frac{a b}{n}\right)_K=\left( \frac{\mathstrut a}{n}\right)_K \left( \frac{b}{n}\right)_K, \quad
\left( \frac{\mathstrut n}{a b}\right)_K=\left( \frac{\mathstrut n}{a}\right)_K \left( \frac{\mathstrut n}{b} \right)_K. 
\label{3e5}\\
\rm v)\ &
\left( \frac{\mathstrut m}{n}\right)_K=\pm1\text{, if $(m,n) = 1$}, \text{ otherwise }\left( \frac{\mathstrut m}{n}\right)_K=0. 
\label{3e6}\\
{\rm vi)}\ & \text{for $n>0$ and}\ a \equiv b \mod{}
\left\{ \begin{array}{rl} 
         4n, & \text{if~}n \equiv 2 \pmod 4,\\
          n, & \text{otherwise~} ,\ 
        \end{array}
\right. 
, \quad \left( \frac{\mathstrut a}{n}\right)_K= \left( \frac{b}{n} \right)_K.
\label{3e7}
\end{align}
%

\subsubsection{$\bm{y^2 \equiv a x^2+1\ (\md\ p)}$,\ $\bm{a>0}$ and 
$\bm{a\equiv 1\ ({\rm mod}\ 4)}$}

When $(n,a)=1$, 
$\left( \dfrac{\mathstrut a}{n}\right)_K=\left( \dfrac{\mathstrut n}{a}\right)_K$ is derived from Eq.\eqref{3e2}.
Since $\left( \dfrac{\mathstrut a}{n}\right)_K=0$ if $(n,a)\neq1$, Eq.\eqref{3e1} can be rewritten as 
\begin{align}
f(\tau)
=\sum_{n=1}^{\infty} \left( \frac{\mathstrut n}{a}\right)_K q^n,\quad  q=\exp(2 \pi i \tau).
\label{3e8}
\end{align}

Eq.\eqref{3e7} shows that 
$\left( \dfrac{\mathstrut n+a}{a}\right)_K=\left( \dfrac{\mathstrut n}{a}\right)_K$ holds.
If $\tau=1/a$, then $q^a=1$, and we obtain
\[
  \left( \frac{\mathstrut n+a}{a}\right)_Kq^{n+a}=\left( \frac{\mathstrut n}{a}\right)_K q^n.
\]
From this equation, it can be seen that $f(1/a)$ repeatedly contains $\bar{f}(1/a)$ shown in the following
\begin{align}
\bar{f}(1/a)=\sum_{n=1}^{a-1} \left( \frac{\mathstrut n}{a}\right)_K \exp(2 \pi i n/a).
\label{3e9}
\end{align}
Since $\left(\dfrac{\mathstrut a}{a}\right)_K=0$, the sum of $n$ in Eq.\eqref{3e9} is up to $a-1$.

If we put $a=p\ ({\rm prime\ number})$ further, $\bar{f}(1/p)$ becomes the Gaussian sum in the form 
\begin{align}
 \bar{f}(1/p)
  =
  \sum_{n=1}^{p-1} \left( \frac{n}{p}\right) \exp( 2 \pi i n/p)
  =
  G_p=\sqrt{p},\quad p \equiv 1 \pmod 4. 
\label{3e10}
\end{align} 

\subsubsection{$\bm{y^2 \equiv a x^2+1 \pmod p}$,\ $\bm{a<0}$ and 
$\bm{a\equiv 1 \pmod 4}$}

Let us consider for $a\equiv 1 \pmod 4$ and $a<0$. In this case we rewrite $\left(\dfrac{\mathstrut  a }{n}\right)_K$ as
\[
  \left(\frac{\mathstrut  a }{n}\right)_K
 =\left(\frac{\mathstrut-|a|}{n}\right)_K
 =\left(\frac{\mathstrut -1 }{n}\right)_K
  \left(\frac{\mathstrut |a|}{n}\right)_K
 =(-1)^{\frac{n'-1}{2}}\times
  (-1)^{\frac{|a|-1}{2}\cdot\frac{n'-1}{2}}
  \left(\frac{n}{\mathstrut |a|}\right)_K
\]
by using Eqs.(\ref{3e5}), (\ref{3e4}) and \eqref{3e2}.
Note that $(-1)^{\frac{|a|-1}{2}}=-1$ because of $|a| \equiv 3 \pmod 4$. Then we obtain
\[
  (-1)^{\frac{n'-1}{2}}\times
  (-1)^{\frac{|a|-1}{2}\cdot\frac{n'-1}{2}}
  =
  (-1)^{\frac{n'-1}{2}}\times
  (-1)^{\frac{n'-1}{2}}
  =
  1 .
\]
Then Eq.\eqref{3e1} can be rewritten as 
\begin{align}
 f(\tau)
  =
  \sum_{n=1}^{\infty} \left( \frac{n}{|a|}\right)_K q^n,\quad q
  =
  \exp(2 \pi i \tau).
\label{3e11}
\end{align}
If $\tau=1/|a|$, then $q^{|a|}=1$, and we obtain
\[
  \left( \frac{n+|a|}{|a|}\right)_Kq^{n+|a|}=\left( \frac{\mathstrut n}{|a|}\right)_K q^n.
\]
The generalized Gaussian sum
$f(1/|a|)$ repeatedly contains $\bar{f}(1/|a|)$ shown in the following
\begin{align}
\bar{f}(1/|a|)=\sum_{n=1}^{|a|-1} \left( \frac{n}{|a|}\right)_K \exp(2 \pi i n/|a|).
\label{3e12}
\end{align}

If we put $|a|=p\ ({\rm prime\ number})$ further, $\bar{f}(1/p)$ becomes the Gaussian sum in the form 
\begin{align}
 \bar{f}(1/p)
  =
  \sum_{n=1}^{p-1} \left( \frac{n}{p}\right) \exp( 2 \pi i n/p)
  =
  G_p=i \sqrt{p}, \quad p \equiv \ 3 \pmod 4. 
\label{3e13}
\end{align} 

\subsubsection{$\bm{y^2 \equiv a x^2+1 \pmod p}$,\ $\bm{a>0}$ and 
$\bm{a\equiv 3 \pmod 4}$}

\begin{thm}
Let $a\equiv 3 \pmod 4$ and $a>0$.
$$
  \left( \dfrac{\mathstrut a}{n+2a}\right)_K=-\left( \dfrac{\mathstrut a}{n}\right)_K.
$$ 
\end{thm}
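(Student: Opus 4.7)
The plan is to reduce the identity to quadratic reciprocity. The summation indices $n$ appearing in the generalized Gaussian sum Eq.\eqref{3e1} for $a\equiv 3\pmod 4$ satisfy $(n,4a)=1$, so I would work under the hypothesis that $n$ is odd and coprime to $a$; if instead $(n,a)\ne 1$ then $\gcd(a,n+2a)=\gcd(a,n)\ne 1$, so both sides of the claimed equality vanish by property~\eqref{3e6} and the case is trivial.

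Under these hypotheses $a$, $n$ and $n+2a$ are positive odd integers, with $(a,n)=(a,n+2a)=1$, so property~\eqref{3e2} applies and flips both symbols:
$$\left(\frac{a}{n}\right)_K=\left(\frac{n}{a}\right)_K(-1)^{\frac{a-1}{2}\cdot\frac{n-1}{2}},\qquad \left(\frac{a}{n+2a}\right)_K=\left(\frac{n+2a}{a}\right)_K(-1)^{\frac{a-1}{2}\cdot\frac{n+2a-1}{2}}.$$
Since $a$ is odd (hence not $\equiv 2\pmod 4$) and $n+2a\equiv n\pmod a$, property~\eqref{3e7} gives $\left(\frac{n+2a}{a}\right)_K=\left(\frac{n}{a}\right)_K$. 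Thus the two expressions differ only by the ratio of their reciprocity signs, which simplifies to $(-1)^{\frac{a-1}{2}\cdot a}$ because $\frac{n+2a-1}{2}-\frac{n-1}{2}=a$.

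It remains to check that this ratio equals $-1$. The hypothesis $a\equiv 3\pmod 4$ makes $\frac{a-1}{2}$ odd, and $a$ itself is odd, so the product $\frac{a-1}{2}\cdot a$ is odd. Combining with the previous step gives $\left(\frac{a}{n+2a}\right)_K=-\left(\frac{a}{n}\right)_K$.

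The argument is essentially a direct invocation of the listed Kronecker symbol properties; the only place requiring attention is the parity bookkeeping in the reciprocity exponent. It is precisely the hypothesis $a\equiv 3\pmod 4$ that makes $\frac{a-1}{2}\cdot a$ odd and supplies the sign flip --- had $a\equiv 1\pmod 4$ the same computation would produce $+1$, consistent with the genuine mod-$a$ periodicity of $\left(\frac{a}{n}\right)_K$ established earlier in the paper.
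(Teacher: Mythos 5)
Your proposal is correct and takes essentially the same route as the paper's own proof: apply reciprocity, Eq.~\eqref{3e2}, to both symbols, use the mod-$a$ periodicity, Eq.~\eqref{3e7}, to identify $\left(\frac{n+2a}{a}\right)_K$ with $\left(\frac{n}{a}\right)_K$, and extract the sign $-1$ from the parity of the exponent $\frac{a-1}{2}\cdot a$, which is odd precisely because $a\equiv 3\pmod 4$. The only differences are cosmetic: the paper substitutes $(-1)^{\frac{a-1}{2}}=-1$ at the outset while you defer the parity argument to the end, and your explicit disposal of the degenerate case $(n,a)\neq 1$ (both sides vanishing by Eq.~\eqref{3e6}) is a harmless addition that the paper subsumes under the standing assumption $(n,4a)=1$.
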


\begin{proof}
Because $a=4k+3$ and $n$ is odd integer from $(n, 4a)=1$, we obtain 
$$
  \left( \frac{\mathstrut a}{n}\right)_K
 =
  (-1)^{\frac{a-1}{2} \cdot \frac{n-1}{2} } 
  \left( \frac{\mathstrut n}{a}\right)_K 
 =
  (-1)^{\frac{n-1}{2} }
  \left( \frac{\mathstrut n}{a}\right)_K , 
$$
from Eq.\eqref{3e2}.
While we have
$$
  \left( \frac{a}{n+2a}\right)_K
 =
  (-1)^{\frac{a-1}{2} \cdot \frac{n+2 a-1}{2} } 
  \left( \frac{n+2a}{a}\right)_K 
 =
  (-1)^{\frac{n-1}{2}+a}
  \left( \frac{\mathstrut n}{a}\right)_K ,
$$
where we used Eq.(\ref{3e7}).
Then we obtain $\left( \dfrac{a}{n+2 a }\right)_K\left/\left( \dfrac{\mathstrut a}{n}\right)_K\right.
=(-1)^{a} =-1$. 
\end{proof}

If $\tau=1/a$, then $q_1^{2a}=-1$, we obtain
\[
  \left( \frac{a}{n+2a}\right)_Kq_1^{n+2a}=\left( \frac{\mathstrut a}{n}\right)_K q_1^n.
\]
The generalized Gaussian sum
$f(1/a)$ repeatedly contains $\bar{f}(1/a)$ shown in the following
\begin{align}
\bar{f}(1/a)
  =
   \sum_{\genfrac{}{}{0pt}{}{n=1}{(n,4a)=1}}^{2a-1}\!\!
   \left( \frac{\mathstrut a}{n}\right)_K q_1^n
  =
   \sum_{\genfrac{}{}{0pt}{}{n=1}{(n,4a)=1}}^{2a-1}\!\!
  (-1)^{\frac{n-1}{2} }
  \left( \frac{\mathstrut n}{a}\right)_K q_1^n.
\label{3e14}
\end{align}

In Eq.\eqref{3e14}, $n$ takes $(a-1)$ values as {$n\in\left\{ 1, 3, \cdots, \widecheck{a},\cdots, 2a-1\right\}$,
where $\widecheck{a}$ indicates that $a$ is not included. These values can be mapped to the following values $m$ as
{$m\in\left\{ 1, 2, 3, \cdots, a-1 \right\}$. The relation between $n$ and $m$ is given with suitable integer $\ell$ as follows
\begin{align}
  n=4m-a(2\ell-1).
\label{eq:mnrelation}
\end{align}
We show here one example for $a=11$. Proof for general $a$ will be given in Appendix A.
\[
\begin{tabular}{|c||c|c|c|c|c|c|c|c|c|c|} \hline
$~m$   \rule[-2mm]{0mm}{7mm}&  1 & 2 &\,3\:&\,4\:&\,5\:& 6 & 7 & 8 &\,9\:&10  \\\hline
$ \ell$\rule[-2mm]{0mm}{7mm}&  0 & 0 &  1  &  1  &  1  & 1 & 1 & 1 &  2  & 2  \\\hline
$~n$   \rule[-2mm]{0mm}{7mm}& 15 &19 &  1  &  5  &  9  &13 &17 &21 &  3  & 7  \\\hline
\end{tabular}
\]
Replace $n$ in Eq.\eqref{3e14} with $m$ to get the following equation
\begin{align}
\bar{f}(1/a)
&=
\sum_{\genfrac{}{}{0pt}{}{n=1}{(n,4a)=1}}^{2a-1}\!\!
\left( \frac{\mathstrut n}{a}\right)_K
 (-1)^{\frac{n-1}{2}} q_1^n
=\sum_{m=1}^{a-1} \left( \frac{4m-a(2\ell-1)}{a}\right)_K
 (-1)^{\frac{4m-a(2\ell-1)-1}{2}} q_1^{4m-a(2\ell-1)}.
\label{3e16}
\end{align}
By using Eqs.\eqref{3e7}, \eqref{3e5} and \eqref{3e3}, we obtain
\[
  \left( \frac{4m-a(2\ell-1)}{a}\right)_K
  =
  \left( \frac{4 m}{a}\right)_K
  =
  \left( \frac{2}{a}\right)_K\left( \frac{2}{a}\right)_K\left( \frac{\mathstrut m}{a}\right)_K
  =
  \left( \frac{\mathstrut m}{a}\right)_K.
\]
Furthermore, we obtain
\[
  (-1)^{\frac{4m-a(2\ell-1)-1}{2}}
  =
  (-1)^{2m-a\ell+\frac{a-1}{2}}
  =
  \left((-1)^{-a}\right)^{\ell}(-1)^{\frac{a-1}{2}}
  =
  (-1)^{\ell+1},
\]
\[
  q_1^{4m-a(2\ell-1)}
  =
  \exp(2\pi im/a)\exp\bigl((-2\ell+1)\pi i/2\bigr)
  =
  i(-1)^{\ell}\exp(2\pi im/a),
\]
where we used $q_1=\exp(\pi i/2a)$.

If we put $a=p\ ({\rm prime\ number})$ further, $\bar{f}(1/p)$ becomes the Gaussian sum in the form 
\begin{align}
 \bar{f}(1/p)
 =
 -i \sum_{m=1}^{p-1} \left( \frac{m}{p}\right) \exp( 2 \pi i m/p)
 =
 -iG_p=\sqrt{p}, 
\quad p \equiv 3 \pmod 4. \label{3e17}
\end{align} 

\subsubsection{$\bm{y^2 \equiv a x^2+1 \pmod p}$,\ $\bm{a<0}$ and 
$\bm{a\equiv 3 \pmod 4}$}

We rewrite $f(\tau)$ as
\begin{align}
f(\tau)
=
\sum_{\genfrac{}{}{0pt}{}{n=1}{(n,4a)=1}}^{\infty}\!\!
\left( \frac{-|a|}{n}\right)_K q_1^n, \quad  
q_1=\exp(2 \pi i \tau/4).
\label{3e18}
\end{align}

\begin{thm}
Let $a\equiv 3 \pmod 4$ and $a<0$.
$$
  \left( \dfrac{-|a|}{n+2|a|}\right)_K=-\left( \dfrac{-|a|}{n}\right)_K.
$$ 
\end{thm}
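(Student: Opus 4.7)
The plan is to mirror the structure of the preceding Theorem 3.1 but account for the extra $-1$ in the numerator. Since $a<0$ and $a\equiv 3\pmod 4$, we have $|a|\equiv 1\pmod 4$, so Quadratic Reciprocity will be trivial on $|a|$ and the relevant sign will come entirely from $\left(\dfrac{-1}{n}\right)_K$ together with the parity of $|a|$. The condition $(n,4a)=1$ forces $n$ odd, which lets me apply Eqs.\eqref{3e2} and \eqref{3e4} directly with $n'=n$.

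First, using multiplicativity \eqref{3e5} and Eq.\eqref{3e4}, I would write
\[
  \left(\frac{-|a|}{n}\right)_K
  =\left(\frac{-1}{n}\right)_K\!\left(\frac{|a|}{n}\right)_K
  =(-1)^{\frac{n-1}{2}}\left(\frac{|a|}{n}\right)_K.
\]
Then by reciprocity \eqref{3e2}, since $|a|\equiv 1\pmod 4$, the factor $(-1)^{\frac{|a|-1}{2}\cdot\frac{n-1}{2}}$ equals $1$, so
\[
  \left(\frac{|a|}{n}\right)_K=\left(\frac{n}{|a|}\right)_K,
  \qquad\text{hence}\qquad
  \left(\frac{-|a|}{n}\right)_K
  =(-1)^{\frac{n-1}{2}}\left(\frac{n}{|a|}\right)_K.
\]

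Next, I would perform the identical manipulation with $n$ replaced by $n+2|a|$ (still odd, still coprime to $|a|$). The reciprocity factor remains $1$, so
\[
  \left(\frac{-|a|}{n+2|a|}\right)_K
  =(-1)^{\frac{n+2|a|-1}{2}}\left(\frac{n+2|a|}{|a|}\right)_K.
\]
The exponent simplifies as $\frac{n+2|a|-1}{2}=\frac{n-1}{2}+|a|$, so the sign picks up a factor $(-1)^{|a|}=-1$ because $|a|$ is odd. For the remaining Kronecker symbol, since $|a|$ is odd (so $|a|\not\equiv 2\pmod 4$), the periodicity property \eqref{3e7} with modulus $|a|$ gives $\left(\dfrac{n+2|a|}{|a|}\right)_K=\left(\dfrac{n}{|a|}\right)_K$.

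Putting these together yields
\[
  \left(\frac{-|a|}{n+2|a|}\right)_K
  =-(-1)^{\frac{n-1}{2}}\left(\frac{n}{|a|}\right)_K
  =-\left(\frac{-|a|}{n}\right)_K,
\]
as claimed. The only subtlety is keeping track of where each sign comes from: the reciprocity sign is innocuous here (unlike Theorem 3.1, where $a\equiv 3\pmod 4$ positive made it nontrivial), so the full minus sign is produced by the interaction between $\left(\dfrac{-1}{n}\right)_K$ and the shift $n\mapsto n+2|a|$ via the parity of $|a|$. This will be the only real bookkeeping issue; no deeper obstacle is expected.
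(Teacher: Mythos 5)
Your proposal is correct and follows essentially the same route as the paper's own proof: both split $\left(\frac{-|a|}{n}\right)_K$ via multiplicativity into $\left(\frac{-1}{n}\right)_K\left(\frac{|a|}{n}\right)_K$, use Eq.~\eqref{3e4} and reciprocity~\eqref{3e2} (trivial since $|a|\equiv 1 \pmod 4$) to reduce to $(-1)^{\frac{n-1}{2}}\left(\frac{n}{|a|}\right)_K$, and then extract the sign flip from the parity of $|a|$ in the shifted exponent together with the periodicity~\eqref{3e7}. No gaps; the bookkeeping matches the paper's computation exactly.
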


\begin{proof}
Because $|a|=1 \pmod 4$ and $n=$ odd number from $(n, 4|a|)=1$, 
$(-1)^{\frac{|a|-1}{2}}=1$ and $\frac{n-1}{2}$ is integer. Then, we obtain
$$
  \left( \frac{-|a|}{n}\right)_K
  =
  \left( \frac{-1}{n}\right)_K \left( \frac{|a|}{n}\right)_K 
  =
  (-1)^{\frac{n-1}{2}}
  \times
  (-1)^{\frac{|a|-1}{2} \cdot \frac{n-1}{2} }
  \left( \frac{n}{|a|}\right)_K 
  =
  (-1)^{\frac{n-1}{2}} \left( \frac{n}{|a|}\right)_K.
$$ 
While we obtain
\begin{align}
\left( \frac{-|a|}{n+2|a| }\right)_K
 &=
 \left( \frac{-1}{n+2|a| }\right)_K \left( \frac{|a|}{n+2|a| }\right)_K
 =
 (-1)^{\frac{n+2 |a|-1}{2}} 
 \times
 (-1)^{\frac{|a|-1}{2} \cdot \frac{n+2|a|-1}{2} }
 \left( \frac{n+2| a| }{|a|}\right)_K 
\nonumber\\
&= (-1)^{\frac{n+1}{2}} 
\left( \frac{n}{|a|} \right)_K .
\nonumber
\end{align}
Note that $\dfrac{n+2 |a|-1}{2}=\dfrac{n+1}{2}+|a|-1$. Then we obtain 
$\displaystyle{\left( \frac{-|a|}{n+2| a| }\right)_K\left/\left( \frac{-|a|}{n}\right)_K\right.=-1}$.
\end{proof}

If $\tau=1/|a|$, then $q_1^{2|a|}=-1$, and we obtain
\[
  \left( \frac{-|a|}{n+2|a|}\right)_Kq_1^{n+2|a|}=\left( \frac{\mathstrut -|a|}{n}\right)_K q_1^n.
\]
The generalized Gaussian sum $f(1/|a|)$ repeatedly contains $\bar{f}(1/|a|)$ shown in the following
\begin{align}
\bar{f}(1/|a|)
=
\sum_{\genfrac{}{}{0pt}{}{n=1}{(n,4|a|)=1}}^{2|a|-1}\!\!
\left( \frac{-|a|}{n}\right)_K q_1^n
=
\sum_{\genfrac{}{}{0pt}{}{n=1}{(n,4|a|)=1}}^{2|a|-1}\!\!
(-1)^{\frac{n-1}{2}}
\left( \frac{n}{|a|}\right)_K q_1^n.
\label{3e19}
\end{align}
The relation between
$n\in\left\{ 1, 3, \cdots, |\widecheck{a}|,\cdots, 2|a|-1\right\}$
and
$m\in\left\{ 1, 2, 3, \cdots, |a|-1 \right\}$
is given with suitable $\ell$ as follows
$$
  n=4m-|a|(2\ell-1).
$$
This will be proved in Appendix A.

By the same calculation as shown below Eq.\eqref{3e16}, we have 
\begin{align}
\bar{f}(1/|a|)
&=
\sum_{\genfrac{}{}{0pt}{}{n=1}{(n,4|a|)=1}}^{2|a|-1}\!\!(-1)^{\frac{n-1}{2}} 
\left( \frac{n}{|a|}\right)_K q_1^n\nonumber\\
&=
\sum_{m=1}^{|a|-1}(-1)^{\frac{4m-|a|(2\ell-1)-1}{2}} 
\left( \frac{4m-|a|(2\ell-1)}{|a|}\right)_K q_1^{4m-|a|(2\ell-1)}
\nonumber\\
&=
\sum_{m=1}^{|a|-1}  (-1)^{\ell} 
\left( \frac{m}{|a|}\right)_K \exp(2\pi i m/|a|) \times i (-1)^{\ell}
=i \sum_{m=1}^{|a|-1} \left( \frac{m}{|a|}\right)_K \exp( 2 \pi i m/|a|),
\label{3e20}
\end{align}
where we used $q_1=\exp(\pi i/2|a|)$.

If we put $|a|=p\ ({\rm prime\ number})$ further, $\bar{f}(1/p)$} becomes the Gaussian sum in the form 
\begin{align}
 \bar{f}(1/p)
  =
  i \sum_{m=1}^{p-1} \left( \frac{m}{p}\right) \exp( 2 \pi i m/p)
  =
  iG_p=i \sqrt{p}, \quad p \equiv 1 \pmod 4. 
\label{3e21}
\end{align} 

\subsubsection{$\bm{y^2 \equiv a x^2+1 \pmod p}$,\ $\bm{a\equiv 2 \pmod 4}$ }

We put $a=4k+2=2(2k+1)=2 a'\text{, where $a'$ is odd}$.
Here $n$ is odd owing to $(n, 4a) =1$.

\begin{thm}
Let $a\equiv 2 \pmod 4$ and $a>0$.
$$
  \left( \dfrac{a}{n+2 a} \right)_K=-\left( \dfrac{\mathstrut a}{n} \right)_K.
$$ 
\end{thm}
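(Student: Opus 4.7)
The plan is to exploit the fact that $a \equiv 2 \pmod 4$ admits a factorization $a = 2a'$ with $a'$ odd, and to split the Kronecker symbol multiplicatively (Eq.\eqref{3e5}) as
\[
  \left(\frac{a}{n}\right)_K = \left(\frac{2}{n}\right)_K \left(\frac{a'}{n}\right)_K.
\]
This reduces the claim to two separate statements under the shift $n \mapsto n+2a$: the odd-part factor $\left(\frac{a'}{n}\right)_K$ is invariant, while the $2$-part factor $\left(\frac{2}{n}\right)_K$ flips sign. The product then produces the global sign $-1$. Throughout I may use that $(n,4a)=1$ forces $n$ to be odd.

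For the odd-part invariance I would apply quadratic reciprocity (Eq.\eqref{3e2}) to both $\left(\frac{a'}{n}\right)_K$ and $\left(\frac{a'}{n+2a}\right)_K$, transposing each to a symbol of the form $\left(\frac{\cdot}{a'}\right)_K$. The ratio of the two reciprocity signs is $(-1)^{\frac{a'-1}{2}\cdot a}$, which is $+1$ because $a$ is even and $\frac{a'-1}{2}$ is a nonnegative integer. Since $a'$ is odd (hence not $\equiv 2 \pmod 4$), the periodicity statement Eq.\eqref{3e7} applies with modulus $a'$, and $n+2a = n+4a' \equiv n \pmod{a'}$ yields $\left(\frac{n+2a}{a'}\right)_K = \left(\frac{n}{a'}\right)_K$. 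Combining, $\left(\frac{a'}{n+2a}\right)_K = \left(\frac{a'}{n}\right)_K$.

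For the $2$-part sign flip I would invoke Eq.\eqref{3e3}, which says that $\left(\frac{2}{n}\right)_K$ depends only on $n \bmod 8$, together with the observation that $a'$ odd forces $4a' \equiv 4 \pmod 8$, so $n+2a \equiv n+4 \pmod 8$. A one-line check on the four odd residue classes modulo $8$ shows that adding $4$ swaps the $+1$ classes $\{1,7\}$ with the $-1$ classes $\{3,5\}$, so $\left(\frac{2}{n+2a}\right)_K = -\left(\frac{2}{n}\right)_K$. Multiplying the two relations proves the theorem.

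The structure mirrors the previous two theorems for $a \equiv 3 \pmod 4$, so nothing is conceptually hard; the step to handle carefully is the parity bookkeeping that isolates the sign. Both the vanishing of the reciprocity sign (via evenness of $a$) and the nontrivial $4 \bmod 8$ shift (via oddness of $a' = a/2$) depend directly on the hypothesis $a \equiv 2 \pmod 4$, and it is precisely the separation of responsibilities between the $2$-part and the $a'$-part that makes the net sign come out to $-1$ rather than $+1$.
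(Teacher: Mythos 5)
Your proof is correct and takes essentially the same approach as the paper: factor $a=2a'$ with $a'$ odd, split the symbol as $\left(\frac{2}{n}\right)_K\left(\frac{a'}{n}\right)_K$, obtain invariance of the odd part from reciprocity (whose sign ratio vanishes since $a$ is even) together with periodicity modulo $a'$, and extract the overall $-1$ from the $2$-part. The only presentational difference is that the paper runs this as one chain of equalities, deriving the flip from the exponent identity $(-1)^{\frac{(n+4a')^2-1}{8}}=(-1)^{\frac{n^2-1}{8}}(-1)^{na'}$ with $na'$ odd, whereas you check the four odd residue classes modulo $8$ under the shift $n\mapsto n+4$ --- the same fact in different bookkeeping.
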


\begin{proof}
Eq.\eqref{3e3} can be rewritten for odd $n$ as
$
 \left( \dfrac{\mathstrut n}{2}\right)_K
 =
  \left( \dfrac{\mathstrut2}{n}\right)_K
 =(-1)^\frac{n^2-1}{8}.
$
Then we obtain
\begin{align}
&\left( \frac{a}{n+2 a} \right)_K
=\left( \frac{2a'}{n+4 a'} \right)_K
=\left( \frac{2}{n+4 a'}\right)_K \left( \frac{ a'}{n+4 a'} \right)_K
\nonumber\\
&=(-1)^{\frac{(n+4 a')^2-1}{8}} \times
  (-1)^{\frac{a'-1}{2} \cdot \frac{(n+4 a')-1}{2}} 
  \left( \frac{n+4 a'}{a'} \right)_K 
=(-1)^{\frac{n^2-1}{8}} (-1)^{n a'}  \times
  (-1)^{\frac{a'-1}{2} \cdot \frac{n-1}{2}}
  \left( \frac{\mathstrut n}{a'} \right)_K
\nonumber 
\\
&=-(-1)^{\frac{n^2-1}{8}} \times
  (-1)^{\frac{a'-1}{2} \cdot \frac{n-1}{2}} 
  \left( \frac{\mathstrut n}{a'} \right)_K.
\nonumber
\end{align}
While we obtain
\begin{align}
&\left( \frac{\mathstrut a}{n} \right)_K=\left( \frac{2a'}{n} \right)_K
=\left( \frac{2}{n} \right)_K \left( \frac{a'}{n} \right)_K
=(-1)^{\frac{n^2-1}{8}} \times
 (-1)^{\frac{a'-1}{2} \cdot \frac{n-1}{2}}
 \left( \frac{\mathstrut n}{a'} \right)_K  .
\nonumber
\end{align}
Thus we obtain $\left( \dfrac{a}{n+2 a} \right)_K\left/\left( \dfrac{\mathstrut a}{n} \right)_K\right.=-1$.
\end{proof}

If $\tau=1/a$, then $q_1^{2a}=-1$, and we obtain
\[
  \left( \frac{a}{n+2a}\right)_K q^{n+2a}=\left( \frac{\mathstrut a}{n}\right)_K q^n.
\]
The generalized Gaussian sum $f(1/a)$ repeatedly contains $\bar{f}(1/a)$ shown in the following 
\begin{align}
\bar{f}(1/a)
=
\sum_{\genfrac{}{}{0pt}{}{n=1}{(n,4a)=1}}^{2a-1}\!\!
\left( \frac{\mathstrut a}{n} \right)_K \exp( 2 \pi i n/4a).
\label{3e22}
\end{align}
In the case of $a<0$, the similar calculation shows that
\begin{align}
\bar{f}(1/|a|)
=
\sum_{\genfrac{}{}{0pt}{}{n=1}{(n,4|a|)=1}}^{2|a|-1}\!\!
\left( \frac{-|a|}{n} \right)_K \exp( 2 \pi i n/4|a|).
\label{3e23}
\end{align}
 
By using the other expression of Gaussian sum, 
Eqs.\eqref{3e22} and \eqref{3e23} are expected to be rewritten as follows
\begin{align*}
  \bar{f}(1/a)
 &=\frac{1}{1+i}\sum_{n=0}^{2 a-1} \exp( 2 \pi i n^2/4 a)=\sqrt{a},
  \quad\text{if $a>0$ and $a\equiv2 \pmod 4$},\\
  \bar{f}(1/|a|)
 &=\frac{i}{1+i}\sum_{n=0}^{2 |a|-1} \exp( 2 \pi i n^2/4 |a|)=i \sqrt{|a|},
  \quad\text{if $a<0$ and $a\equiv2 \pmod 4$}.
\end{align*}
For $a=\pm 2, \pm 6, \pm 10$, we have verified that our expectation is correct. Below, we show $a=6$ case.
With $q_1=\exp(\pi i /12)$, we obtain
\begin{align*}
  &\sum_{\genfrac{}{}{0pt}{}{n=1}{(n,24)=1}}^{11}\!\!
  \left( \frac{\mathstrut 6}{n} \right)_K q_1^n
  =
  q_1 + q_1^5 - q_1^7 - q_1^{11} = \sqrt{6},
\end{align*}
and 
\begin{align*}
  \sum_{n=0}^{11} q_1^{n^2}
  &=
    1 + q_1^{ } + q_1^{4} + q_1^{9} - q_1^{4} + q_1^{ }
  - 1 + q_1^{ } - q_1^{4} + q_1^{9} + q_1^{4} + q_1^{ } \\
  &=
   4q_1 + 2q_1^9 
   =
   (1+i)\sqrt{6}.
\end{align*}

\subsection{%
Modular transformation structure
}

Here we explain the generalized Gaussian sum is associated with a theta function.

\begin{thm}
There are two expressions of the Gaussian sum in the form
\begin{align}
G_p
=\sum_{n=1}^{p-1} \left(\frac{n}{p} \right) \exp(2 \pi i n/p)
=\sum_{m=0}^{p-1} \exp ( 2 \pi i m^2/p)
=\sqrt{(-1)^{\frac{p-1}{2}} p}.
\label{3e24}  
\end{align}
\end{thm}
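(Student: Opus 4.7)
I would split the theorem into three sub-claims and attack them in order: the two sum representations for $G_p$ coincide; $G_p^2 = (-1)^{(p-1)/2}p$; and the resulting square root takes the positive sign.

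\textbf{Step 1 (equivalence of the two sums).} As $m$ runs over $\{0,1,\dots,p-1\}$, the map $m\mapsto m^2 \bmod p$ hits $0$ once and each nonzero quadratic residue exactly twice, which gives
\[
\sum_{m=0}^{p-1}\exp(2\pi i m^2/p)=1+2\!\!\!\sum_{r\ \mathrm{QR}}\!\!\exp(2\pi i r/p).
\]
On the other side, I would split the Legendre-weighted sum by residue type and use the geometric identity $\sum_{n=1}^{p-1}\exp(2\pi in/p)=-1$ to trade the non-residue contribution for a residue one; the two expressions then match. This part is pure bookkeeping.

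\textbf{Step 2 (squaring $G_p$).} First, the substitution $n\mapsto -n \bmod p$ inside the Legendre sum gives $\overline{G_p} = \bigl(\tfrac{-1}{p}\bigr)G_p = (-1)^{(p-1)/2}G_p$. Next, I would expand $G_p\,\overline{G_p}$ as a double sum over $(a,b)\in \F\times\F$; the change of variables $a=bt$ separates it into a Legendre-weighted sum over $t$ times a geometric sum over $b$, whose inner value is $p-1$ at $t=1$ and $-1$ otherwise. Combined with $\sum_{t\ne 1}\bigl(\tfrac{t}{p}\bigr)=-1$, this yields $|G_p|^2=p$. Eliminating $\overline{G_p}$ gives $G_p^2=(-1)^{(p-1)/2}p$, and hence $G_p = \pm\sqrt{(-1)^{(p-1)/2}p}$.

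\textbf{Step 3 (the sign, the main obstacle).} Only the positive sign requires substantial work; this is Gauss's celebrated difficulty. Two routes I would consider: (a) Schur's approach via the eigenvalue structure of the finite Fourier matrix $M_{jk}=\exp(2\pi i jk/p)$, whose trace equals $G_p$ and whose characteristic polynomial is explicit enough to force the sign; (b) a theta-function route exploiting Jacobi's modular transformation $\vartheta(0;-1/\tau)=\sqrt{-i\tau}\,\vartheta(0;\tau)$ specialized to recover the Gauss sum as a limit. Since the subsequent material of the paper works in theta-function language, route (b) is the more natural choice here; in either case, the sign determination, not the algebraic squaring of Step~2, is the genuine obstacle.
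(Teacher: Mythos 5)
Your Step 1 is, in substance, the paper's entire proof: the paper's argument (labeled ``Proof of second expression'') defines $I=\sum\exp(2\pi i a/p)$ over quadratic residues $a$, observes that $m\mapsto m^2$ covers each nonzero residue exactly twice so that $I=\frac{1}{2}\sum_{m=1}^{p-1}\exp(2\pi i m^2/p)$, writes $G_p=I-J$ with $J$ the non-residue sum, and eliminates $J$ via $I+J=-1$ to obtain $G_p=2I+1=\sum_{m=0}^{p-1}\exp(2\pi i m^2/p)$. That is exactly your bookkeeping, run in the opposite direction. The difference is scope. The paper proves only this first equality; the closed-form value $G_p=\sqrt{(-1)^{(p-1)/2}p}$ is asserted with no proof at all. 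Your Step 2 --- $\overline{G_p}=\bigl(\tfrac{-1}{p}\bigr)G_p$ via $n\mapsto -n$, then $|G_p|^2=p$ via the double sum and the substitution $a=bt$ --- is correct and supplies the standard derivation of $G_p^2=(-1)^{(p-1)/2}p$, which the paper never gives. Your Step 3 correctly isolates the sign determination as the genuinely hard point (Gauss's problem) and names two viable routes (Schur's finite Fourier matrix; the Jacobi theta transformation $\vartheta(0;-1/\tau)=\sqrt{-i\tau}\,\vartheta(0;\tau)$), but executes neither, so as a self-contained proof your proposal still leaves the sign open. Be aware, however, that this is a gap the paper shares without acknowledging: relative to what the paper actually proves, your proposal covers strictly more ground, and on the overlapping part the two arguments coincide.
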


\begin{proof}[Proof of second expression]
We consider the quantity
\begin{align}
I
=
\sum_{\substack{a=1,\,a=\text{quadratic} \\ \text{residue}} }^{p-1}
\exp(2 \pi i a/p),
\label{3e25}
\end{align}
in $a \in \F$.
$\left\{ 1^2, 2^2, \cdots, n^2, \cdots, (p-n)^2, \cdots, (p-1)^2 \right\}$
are quadratic residues, and the same quadratic residue comes twice
as $n^2 \equiv (p-n)^2\ ({\rm mod}\ p)$.
Then we have 
$\displaystyle{I=\frac{1}{2} \sum_{m=1}^{p-1} \exp( 2 \pi i m^2/p)}$.
While we obtain
\begin{align}
G_p
&=\sum_{m=1}^{p-1} \left( \frac{m}{p} \right) \exp( 2 \pi i m/p)
=\sum_{\substack{a=1,\,a=\text{quadratic}\\ \text{residue}} }    ^{p-1} \exp(2\pi i a/p)
-\sum_{\substack{a=1,\,a=\text{quadratic}\\ \text{non-residue}} }^{p-1} \exp(2\pi i a/p)
\nonumber 
\\
&=I-J.
\label{3e26}
\end{align}
By the way, we obtain
$\displaystyle{I+J=\sum_{m=1}^{p-1} \exp( 2 \pi i m/p)=-1}$, which gives $J=-I-1$. Then we conclude 
\begin{align}
G_p=2 I+1=1+\sum_{m=1}^{p-1} \exp( 2 \pi i m^2/p)=\sum_{m=0}^{p-1} \exp( 2 \pi i m^2/p).
\label{3e27}
\end{align}
\end{proof}
Thus we obtain the generalized Gaussian sum with the other expression in the form 
\begin{align}
G(\tau)
=\sum_{n=1}^{\infty} \left(\frac{n}{p} \right) \exp(2 \pi i n \tau)
=\sum_{m=0}^{\infty} \exp ( 2 \pi i m^2 \tau).
\label{3e28}  
\end{align}
Using this generalized Gaussian sum with the other expression, 
we can connect  the generalized Gaussian sum with the elliptic theta 
function in the form
\begin{align}
G(\tau)
=1+\sum_{n=1}^{\infty} \exp( 2 \pi i n^2 \tau)
=\frac{1}{2} \left( 
   \vartheta\left[\begin{array}{@{\,}c@{\,}}
                    1 \\
                    1 \\ 
                  \end{array}\right]
  (0,\tau)
  +1 \right).
\label{3e29}
\end{align}
The elliptic theta function has the structure of the modular transformation, 
and by shifting the constant value, the generalized Gaussian sum 
also has the structure of the modular transformation.

Through the considerations above, we conclude the generalized Gaussian sum is the quadratic curve analogue of the modular form in the Taniyama-Shimura conjecture.

\section{%
Summary and Discussions}

We have examined the quadratic curve analogue of the Taniyama-Shimura conjecture for
the quadratic curves. The number of solutions in $\FF$ is governed by the order of
the quadratic curve analogue of the Mordell-Weil group.
For quadratic curves $y^2 \equiv a x^2+1\ ({\rm mod}\ p)$,
the order of the group of the Mordell-Weil analogue is given by
$\displaystyle{N(p)=p-\left( \frac{a}{p} \right)}$.
If we make the combination of 
$\displaystyle{b(p)=p-N(p)=\left( \frac{a}{p} \right)}$, we obtain 
\begin{align*}
-b(p)=N(p)-p=\sum_{n=1}^{p-1} \left( \frac{ a x^2+1}{p} \right).
\label{5e1}
\end{align*}

For the quadratic curve analogue of the modular form of the Taniyama-Shimura conjecture, 
by replacing the Legendre symbol with the Kronecker symbol, 
we obtain the generalized Gaussian sum. If we use the other form of the Gaussian sum,
the generalized Gaussian sum is connected with the elliptic theta function with zero 
argument. Thus, the generalized Gaussian sum has the structure of the 
modular transformation.

The quadratic curve analogue of the conductor of the elliptic curve over $\FF$ 
 is the discriminant 
of quadratic curves $y^2 \equiv a x^2+1 \pmod p$,
which is given by 
$$
N_C
=
\left\{
 \begin{array} {rl}
   a, & \text{if~}a \equiv 1    \pmod 4,\\
  4a, & \text{if~}a \equiv 2, 3 \pmod 4.
 \end{array} \right.
$$
The meaning of this conductor $N_C$  is given as follows:
\begin{enumerate}
\item For the prime $p$ with 
$(p,4a) \ne 1, ~\text{the quadratic curves}~ y^2 \equiv a x^2+1 \pmod p$ 
reduces to the equation of points $y^2 \equiv 1 \pmod p$.
\item For the prime $p$ with $(p,4a) = 1$,
$b(p)=\left( \dfrac{a}{p} \right)_K$
has the periodic property of the form $b(p+N_C)=b(p)$.
\end{enumerate}

The quadratic curve analogue of the level of the congruent modular form is the integer $N_L$ in such a way that the generalized Gaussian sum becomes periodic by setting
$q^{N_L}=1$ if $a \equiv 1 \pmod 4$ 
and 
$q_1^{N_L}=1$ if $a \equiv 3 \pmod 4$.
Therefore, we obtain 
$N_L=a $ if $a \equiv 1 \pmod 4$ and
$N_L=4a$ if $a \equiv 3 \pmod 4$.

Thus, for the quadratic curves, the conductor $N_C$ and $N_L$ takes the same value
\[
 N_C=N_L=
\left\{ \begin{array}{rl} 
          a, & \text{if~}a \equiv 1 \pmod 4,\\
         4a, & \text{if~}a \equiv 3 \pmod 4.\ 
        \end{array}
\right. 
\]

\begin{appendices}
\setcounter{equation}{0}
\section{Correspondence between \\
$\bm{n~\in\left\{1, 3, 5, \cdots \widecheck{a}, \cdots, 2 a-1\right\}}$ 
and $\bm{m~\in\left\{ 1, 2, 3, \cdots , a-1 \right\}}$ for odd $\bm a$}
\setcounter{equation}{0}

In this appendix, we show that
$n~\in\left\{1, 3, 5, \cdots \widecheck{a}, \cdots, 2 a-1\right\}$
and $m~\in\left\{ 1, 2, 3, \cdots , a-1 \right\}$
are mapped to each other by using the following relation with a suitable integer $\ell$ for odd $a$:
\begin{align}
  n=4m-a(2\ell-1), \quad \ell=0,~1,~2,\cdots.
\label{eq:relation_nm}
\end{align}

\begin{thm}
If $m_1 \ne m_2$, then $n_1 \ne n_2$.
\end{thm}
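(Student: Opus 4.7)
The plan is to argue by contradiction: assume $m_1 \neq m_2$ but $n_1 = n_2$, and derive a divisibility constraint that is incompatible with the range of the $m_i$.

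First, I would subtract the two defining relations
\[
  n_1 = 4m_1 - a(2\ell_1 - 1), \qquad n_2 = 4m_2 - a(2\ell_2 - 1)
\]
to obtain $4(m_1 - m_2) = 2a(\ell_1 - \ell_2)$, which simplifies to
\[
  2(m_1 - m_2) = a(\ell_1 - \ell_2).
\]
So the equation $n_1 = n_2$ forces a divisibility relation between $m_1 - m_2$ and $a$.

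Next, I would invoke the hypothesis that $a$ is odd, so $\gcd(a, 2) = 1$. Applying Euclid's lemma to the equation above, $a$ must divide $m_1 - m_2$. But since $m_1, m_2 \in \{1, 2, \ldots, a-1\}$, the difference satisfies $|m_1 - m_2| \leq a - 2 < a$, so the only multiple of $a$ in this range is $0$. Hence $m_1 = m_2$, contradicting the assumption.

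There is essentially no obstacle here; the argument is a one-line divisibility chase. The only subtlety worth flagging is that the statement implicitly assumes each $m$ is paired with a \emph{fixed} choice of $\ell$ (so that the map $m \mapsto n$ is well defined); this is guaranteed by the way $\ell$ is specified in Eq.\eqref{eq:relation_nm}, namely as the unique integer making $n$ fall into the prescribed odd-residue window $\{1, 3, \ldots, 2a-1\}\setminus\{a\}$. Once injectivity is established, combined with the fact that both sets have cardinality $a-1$, one obtains the full bijective correspondence as a corollary.
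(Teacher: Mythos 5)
Your proof is correct and follows essentially the same route as the paper's: both subtract the two relations to get $2(m_1-m_2)=a(\ell_1-\ell_2)$ and then exploit the oddness of $a$ together with the bound $|m_1-m_2|<a$ to force a contradiction. The only cosmetic difference is that you apply Euclid's lemma to conclude $a \mid (m_1-m_2)$ directly, whereas the paper notes that $\ell_1-\ell_2$ must be even and compares the magnitudes $|a(\ell_1-\ell_2)|\ge 2a$ and $|2(m_1-m_2)|\le 2(a-1)$.
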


\begin{proof}
Let 
$n_1=4m_1-a(2\ell_1-1)$ and 
$n_2=4m_2-a(2\ell_2-1)$. Then
\[
  n_1-n_2 = 2\left(2(m_1-m_2)-a(\ell_1-\ell_2)\right).
\]
Suppose $n_1=n_2$, then $2(m_1-m_2)=a(\ell_1-\ell_2)$. Since $a$ is odd, $\ell_1-\ell_2$ must be even. If $m_1 \ne m_2$, $|a (\ell_1-\ell_2)|\ge 2a $, while $|2(m_1-m_2)|\le 2(a-1)$ because $m_1$ and $m_2$ are elements of 
$\left\{ 1, 2, 3, \cdots , a-1 \right\}$. Then,  $2(m_1-m_2)-a (\ell_1-\ell_2)$ cannot be reduced to 0 for $m_1 \ne m_2$,
which contradicts the assumption $n_1=n_2$. Namely, we conclude that $n_1 \ne n_2$ if $m_1 \ne m_2$.
\end{proof}

\begin{thm}
If odd integer $n~\in\left\{1, 3, 5, \cdots \widecheck{a}, \cdots, 2 a-1\right\}$
 is given, then $m~\in\left\{ 1, 2, 3, \cdots , a-1 \right\}$ and $\ell$ are 
 determined from Eq.\eqref{eq:relation_nm}.
\end{thm}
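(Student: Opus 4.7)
The plan is to construct $m$ and $\ell$ explicitly from $n$ by exploiting that $a$ is odd, so $\gcd(4,a)=1$ and $4$ is invertible modulo $a$. Uniqueness of $m$ in $\{1,\ldots,a-1\}$ then follows either directly from this fact or from Theorem~A.1, and $\ell$ is automatically determined by the defining relation once $m$ is fixed.

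First, since $\gcd(4,a)=1$, there is a unique $m\in\{0,1,\ldots,a-1\}$ satisfying $4m\equiv n\pmod{a}$. The value $m=0$ is impossible: it would force $a\mid n$, but the only multiple of $a$ among the odd integers in $[1,2a-1]$ is $n=a$, which is explicitly excluded from the hypothesis. Hence $m\in\{1,2,\ldots,a-1\}$. I then set $\ell:=(4m+a-n)/(2a)$; this is the unique real number making $n=4m-a(2\ell-1)$ hold, so it remains only to verify that $\ell$ is a non-negative integer. For integrality, write $4m-n=ka$ with $k\in\Z$; since $4m$ is even, $n$ is odd, and $a$ is odd, $ka$ is odd, forcing $k$ to be odd. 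Thus $4m+a-n=(k+1)a$ with $k+1$ even, giving $\ell=(k+1)/2\in\Z$. For non-negativity, from $4m\geq 4$ and $n\leq 2a-1$ one obtains $ka\geq 5-2a$, whence $k\geq (5-2a)/a>-2$, so $k\geq -1$ and $\ell\geq 0$.

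The only step requiring genuine care is the parity argument that forces $k$ to be odd; this is precisely what lifts the formula for $\ell$ into $\Z$ rather than $\tfrac{1}{2}+\Z$. The non-negativity bound is a one-line estimate, and uniqueness reduces to the uniqueness of the solution of $4m\equiv n\pmod a$ in the chosen range.
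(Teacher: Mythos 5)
Your proof is correct, and it rests on the same arithmetic fact as the paper's proof --- that $\gcd(4,a)=1$ --- but the execution differs in ways worth noting. The paper proves existence via B\'ezout: it solves $1=4X+aY$, observes that $Y_0$ must be odd, scales by $n$ to get a particular solution $(m_0,\ell_0)$, and then shifts along the solution family $m=m_0+ka$ to land $m$ in $\{1,\ldots,a-1\}$, handling the degenerate case $m=0$ exactly as you do (the only odd multiple of $a$ in $[1,2a-1]$ is the excluded value $n=a$). You instead invert $4$ modulo $a$ to pin down $m$ in one step, then define $\ell=(4m+a-n)/(2a)$ explicitly and verify its integrality by the parity argument on $k$ (your counterpart of the paper's observation that $nY_0$ is odd). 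The genuine gain in your version is the final estimate showing $\ell\geq 0$: the defining relation in the statement requires $\ell\in\{0,1,2,\ldots\}$, and the paper never checks this --- its shifting argument only controls the range of $m$ and leaves the sign of $\ell$ implicit. Your one-line bound $k\geq -1$ closes that small gap. Uniqueness is also cleaner in your formulation, since it falls out of the uniqueness of the residue of $n$ times the inverse of $4$ modulo $a$, whereas the paper relegates injectivity to a separate theorem (the preceding one in the appendix) and does not explicitly combine the two.
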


\begin{proof}
To solve Eq.\eqref{eq:relation_nm}, we first consider the equation
$$1=4 X+ a Y.$$
Thanks to $(4,a)=1$, this equation always has an integer solution $X=X_0$ and $Y=Y_0$, where $Y_0$ must be odd because $a$ is odd. By multiplying $n$, we obtain the solution of Eq.\eqref{eq:relation_nm} as $m=m_0=n X_0$ and 
$-2(\ell-1)=-2(\ell_0-1)=n Y_0$, namely
\[
  n=4 m_0 -a(2\ell_0-1).
\]

It can be seen that Eq.\eqref{eq:relation_nm} has an infinite number of solutions. Indeed, for any integer $k$
\begin{align}
  m=m_0 + ka, \quad \ell=\ell_0 + 2ka,
\label{eq:m_and_n}
\end{align}
are solution of Eq.\eqref{eq:relation_nm}. By making $k$ an appropriate integer, $m$ can be an element of $\left\{ 1, 2, 3, \cdots , a-1 \right\}$ except when $m$ is 0. If $m=0$, however, $n=a$ with $\ell=0$. it is the excluded value for $n$.
The exclusion of $n=a$ follows from the fact that the definition of $\bar{f}(\tau)$ given in Eqs.\eqref{3e18} and \eqref{3e21} includes $\displaystyle{ \left( \frac{\mathstrut a}{n} \right)_K}$, which is 0 for $n=a$.
\end{proof}

\end{appendices}


\end{document}